\numberwithin{equation}{section}
\newtheorem{teo}{Theorem}[section]
\newtheorem{pro}[teo]{Proposition}
\newtheorem{lem}[teo]{Lemma}
\newtheorem{rem}{Remark}
\theoremstyle{definition}
\newtheorem{ejem}{Example}
\newtheorem{defi}{Definition}[section]
\DeclareMathOperator{\Int}{Int}
\newcommand{\R}{\mathbb{R}} 
\newcommand{\N}{\mathbb{N}} 
\newcommand{\tlim}{\displaystyle\lim}
\newcommand{\defrac}{\displaystyle\frac}
\title[T-Zamfirescu and  T-weak contraction ]{T-Zamfirescu and  T-weak contraction mappings on cone metric spaces}
\author[J.R. Morales and E.M. Rojas]{Jos\'e R. Morales \and Edixon Rojas}
\address{Department of Mathematics, Faculty of Science, University of Los Andes, M\'erida-5101, Venezuela.}
\email{moralesj@ula.ve}
\email{edixonr@ula.ve}
\keywords{Fixed point, complete cone metric space, $T-$zamficescu mapping, $T-$weak contraction, subsequentially convergent.}
\subjclass{47H10, 46J10.}
\date{}
\begin{document}

\begin{abstract}

The purpose of this paper is to obtain sufficient conditions for the existence of a unique fixed point of T-Zamfirescu and T-weak
contraction mappings in the framework of complete cone metric spaces.
\end{abstract}

\maketitle

\section{Introduction}

In 2007, Guang and Xiang \cite{LoZh} generalized the concept of metric space, replacing the set of real numbers
by an ordered Banach space and defined a cone metric space. The authors there described the convergence of sequences in
cone metric spaces and introduced the completeness. Also, they proved some fixed point theorems of contractive mappings on
complete cone metric spaces. Since then, fixed point theorems for different (classic) classes of mappings on these spaces
  have been appeared, see for instance \cite{AbRh08}, \cite{IlRa09},  \cite{KaRaRa09}, \cite{KhSa08}, \cite{PaSh09},
 \cite{RaVa08} and \cite{ReHam08}.

On the other hand, recently  A. Beiranvand S. Moradi, M. Omid and H. Pazandeh \cite{BMOP} introduced the $T-$contraction
and $T-$contractive mappings and then they extended the Banach contraction principle and the Edelstein's fixed point Theorem.
S. Moradi \cite{MK} introduced the $T-$Kannan contractive mappings,  extending in this way the Kannan's fixed point theorem
\cite{Ka}. The corresponding version of $T$-contractive, $T$-Kannan mappings and $T-$Chalterjea contractions on cone metric
 spaces was studied in \cite{MR} and \cite{MR1} respectively.  In view of these facts, thereby the purpose of this paper is to study the
 existence of fixed points of $T-$Zamficescu and $T-$weak contraction mappings defined on a complete cone
  metric space $(M,d)$,  generalizing consequently the  results given in \cite{LoZh} and \cite{Za}.

\section{General framework}
In this section we recall the definition of cone metric space and some of their properties
 (see, \cite{LoZh}). The following notions will be useful for us in order to prove the main results.

\begin{defi}
Let $E$ be a real Banach space. A subset $P$ of $E$ is called a cone if and only if:
\begin{description}
\item[(P1)] $P$ is closed, nonempty and $P\neq \{0\}$;
\item[(P2)] $a,b\in \R,\,\, a,b\geq 0,\,\, x,y\in P$ imply $ax+by\in P$;
\item[(P3)] $x\in P$ and $-x\in P\Rightarrow x=0$. I.e., $P\cap(-P)=\{0\}$.
\end{description}
Given a cone $P\subset E,$ we define a partial ordering $\leq$ with respect to $P$ by $x\leq y$ if and only if
 $y-x\in P$. We write $x<y$ to indicate that $x\leq y$ but $x\neq y$,  while $x\ll y$ will stand for $y-x\in \Int P$. (interior of $P$.)
\end{defi}

\begin{defi}
Let $E$ be a Banach space and $P\subset E$ a cone. The cone $P$ is called normal if there is a number $K>0$ such that for all $x,y\in E,\,\, 0\leq x\leq y$ implies $\|x\|\leq K\|y\|.$ The least positive number satisfying the above is called the normal constant of $P.$
\end{defi}
In the following, we always suppose that $E$ is a Banach space, $P$ is a cone in $E$ with $\Int P\neq \emptyset$ and $\leq$ is partial ordering with respect to $P$.

\begin{defi}[\cite{LoZh}]
Let $M$ be a nonempty set. Suppose that  the mapping $d: M\times M\longrightarrow E$ satisfies:
\begin{description}
\item[(d1)] $0<d(x,y)$ for all $x,y\in M$, and $d(x,y)=0$ if and only if $x=y$;
\item[(d2)] $d(x,y)=d(y,x)$ for all $x,y\in M$;
\item[(d3)] $d(x,y)\leq d(x,z)+d(z,y)$ for all $x,y,z\in M$.
\end{description}
Then, $d$ is called a cone metric on $M$ and $(M,d)$ is called a cone metric space.
\end{defi}

Note that the notion of  cone metric space is more general that the concept of  metric space.

\begin{defi}
Let $(M,d)$ be a cone metric space. Let $(x_n)$ be a sequence in $M$ and $x\in M$.
\begin{itemize}
\item[(i)] $(x_n)$ converges to $x$ if for every $c\in E$ with $0\ll c$ there is an $n_0$ such that
           for all $n>n_0,\,\, d(x_n,x)\ll c.$ We denote this by $\tlim_{n\rightarrow \infty} x_n=x$ or $x_n\rightarrow x,\,\, (n\rightarrow \infty)$.
\item[(ii)] If for any $c\in E$ with $0\ll c$ there is an $n_0$ such that for all $n,m\geq n_0$, $\;d(x_n,x_m)\ll c$, then $(x_n)$ is called a Cauchy sequence in $M$.
\end{itemize}
Let $(M,d)$ be a cone metric space. If every Cauchy sequence is convergent in $M,$ then $M$ is called a complete cone metric space.
\end{defi}

\begin{lem}[\cite{LoZh}]
Let $(M,d)$ be a cone metric space, $P\subset E$ a normal cone with normal constant $K.$ Let $(x_n),\,\, (y_n)$ be sequences in $M$ and $x,y\in M$.
\begin{itemize}
\item[\textup{(i)}] $(x_n)$ converges to $x$ if and only if $\tlim_{n\rightarrow \infty} d(x_n,x)=0$.
\item[\textup{(ii)}] If $(x_n)$ converges to $x$ and $(x_n)$ converges to $y$, then $x=y$.
\item[\textup{(iii)}] If $(x_n)$ converges to $x$, then $(x_n)$ is a Cauchy sequence.
\item[\textup{(iv)}] $(x_n)$ is a Cauchy sequence if and only if $\tlim_{n,m\rightarrow \infty} d(x_n,x_m)=0$.
\item[\textup{(v)}] If $x_n\longrightarrow x$ and $y_n\longrightarrow y,\,\, (n\rightarrow \infty)$, then
           $d(x_n,y_n)\longrightarrow d(x,y)$.
\end{itemize}
\end{lem}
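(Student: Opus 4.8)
The plan is to reduce every assertion to a norm estimate by means of the normal constant $K$, after first recording three elementary facts about the cone $P$. Fact (a): for every $\varepsilon>0$ there is a $c\in\Int P$ with $\|c\|<\varepsilon$. Indeed, fix $c_0\in\Int P$ (possible since $\Int P\neq\emptyset$) and a radius $r>0$ with $B(c_0,r)\subseteq P$; for any $\delta>0$ one has $\delta c_0\in\Int P$ because $B(\delta c_0,\,\delta r)\subseteq P$ by (P2), so $\delta c_0$ works once $\delta\|c_0\|<\varepsilon$. Fact (b): if $a\in\Int P$ with $B(a,r)\subseteq P$ and $\|b\|<r$, then $b\ll a$, since $B(a-b,\,r-\|b\|)\subseteq B(a,r)\subseteq P$. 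Fact (c): $\Int P+P\subseteq\Int P$ (if $B(a,r)\subseteq P$ and $b\in P$ then $B(a+b,r)\subseteq P+P\subseteq P$); consequently $u\le v$ and $v\ll w$ imply $u\ll w$, and $c/2\in\Int P$ whenever $c\in\Int P$, and $\Int P+\Int P\subseteq\Int P$.

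For (i): if $x_n\to x$ and $\varepsilon>0$, use Fact (a) to pick $c\in\Int P$ with $K\|c\|<\varepsilon$; for $n$ large, $d(x_n,x)\ll c$, hence $0\le d(x_n,x)\le c$, and normality gives $\|d(x_n,x)\|\le K\|c\|<\varepsilon$. Conversely, if $\|d(x_n,x)\|\to0$ and $0\ll c$, pick $r$ with $B(c,r)\subseteq P$; for $n$ large $\|d(x_n,x)\|<r$, so $d(x_n,x)\ll c$ by Fact (b). Statement (iv) is obtained verbatim with the pair index $(n,m)$ and $d(x_n,x_m)$ replacing $d(x_n,x)$.

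For (ii), from (d2) and (d3), $0\le d(x,y)\le d(x_n,x)+d(x_n,y)$, so normality together with (i) yields $\|d(x,y)\|\le K\bigl(\|d(x_n,x)\|+\|d(x_n,y)\|\bigr)\to0$; hence $d(x,y)=0$ and $x=y$ by (d1). For (iii), given $0\ll c$, note $c/2\in\Int P$; for $n$ large $d(x_n,x)\ll c/2$, so for $n,m$ large $d(x_n,x_m)\le d(x_n,x)+d(x_m,x)\ll c$ by Fact (c). For (v), applying (d2)--(d3) twice gives $\pm\bigl(d(x_n,y_n)-d(x,y)\bigr)\le w_n$ with $w_n:=d(x_n,x)+d(y_n,y)$; thus $0\le w_n-\bigl(d(x_n,y_n)-d(x,y)\bigr)\le 2w_n$, so by normality $\bigl\|w_n-\bigl(d(x_n,y_n)-d(x,y)\bigr)\bigr\|\le 2K\|w_n\|$, and therefore $\|d(x_n,y_n)-d(x,y)\|\le(1+2K)\|w_n\|\to0$ by (i).

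All the triangle-inequality manipulations are routine; the only genuinely delicate point is Facts (a)--(c) --- that is, squeezing out of $\Int P\neq\emptyset$ and the cone axioms (P1)--(P3) the ability to pass freely between the orderings $\le$, $\ll$ and the norm. Once these are available, normality turns each order inequality into a norm inequality and the five claims follow mechanically.
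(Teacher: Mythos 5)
Your proof is correct, and since the paper only quotes this lemma from \cite{LoZh} without proof, the relevant comparison is with that source: your argument (scaling an interior point to get small $c\gg 0$, the absorption property $\Int P+P\subseteq\Int P$, and normality to convert order inequalities into norm inequalities, including the $(1+2K)$ estimate in (v)) is essentially the standard proof given there. No gaps; the preliminary Facts (a)--(c) are exactly the right way to make the passage between $\le$, $\ll$ and the norm rigorous.
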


\begin{defi}
Let $(M,d)$ be a cone metric space, $P$ a normal cone with normal constant $K$ and $T: M\longrightarrow M$. Then
\begin{itemize}
\item[(i)] $T$ is said to be continuous, if $\tlim_{n\rightarrow \infty} x_n=x$ implies that
           $\tlim_{n\rightarrow \infty} T(x_n)=T(x)$ for all $(x_n)$ and $x$ in $M$.

\item[(ii)] $T$ is said to be subsequentially convergent if we have, for every sequence $(y_n),$ if
            $T(y_n)$ is convergent, then $(y_n)$ has a convergent subsequence.

\item[(iii)] $T$ is said to be sequentially convergent if we have, for every sequence $(y_n),$ if
             $T(y_n)$ is convergent then $(y_n)$ also is convergent.
\end{itemize}
\end{defi}

Examples of cone metric spaces can be found for instance in \cite{LoZh}, \cite{ReHam08} and references therein.

\section{Main Results}

This section is devoted to give fixed point results  for $T$-Zamfirescu and $T$-weak contraction mappings on complete (normal) cone metric spaces, as well as, their asymptotic behavior. First, we recall the following classes of contraction type mappings:
\begin{defi}\label{classes}
Let $(M,d)$ be a cone metric space and $T,S: M\longrightarrow M$ two mappings
\begin{itemize}
\item[(i)] The mapping $S$ is called a $T-$Banach contraction, (TB - Contraction) if there is $a\in
           [0,1)$ such that
\begin{equation*}\label{eq3.1}
d(TSx,TSy)\leq a d(Tx,Ty)
\end{equation*} for all $x,y\in M$.

\item[(ii)] The mapping $S$ is called a $T-$Kannan contraction, (TK - Contraction) if there is $b\in
            [0,1/2)$ such that
\begin{equation*}\label{eq3.2}
d(TSx,TSy)\leq b[d(Tx,TSx)+d(y,TSy)]
\end{equation*} for all $x,y\in M$.

\item[(iii)] A mapping $S$ is said to be a Chatterjea contraction, (TC - Contraction) if there is $c\in [0,1/2)$ such that
\begin{equation*}\label{eq3.3}
d(TSx,TSy)\leq c[d(Tx,TSy)+d(Ty,TSx)]
\end{equation*} for all $x,y\in M.$
\end{itemize}

It is clear that if we take $T=I_d$ (the identity map) in the Definition \ref{classes} we obtain the definitions of Banach contraction,
Kannan mapping (\cite{Ka}) and Chatterjea mapping (\cite{Cha}).
\end{defi}

Now, following the ideas of T. Zamfirescu \cite{Za} we introduce the notion of $T-$Zamfirescu mappings.

\begin{defi}\label{classes Zamfrescus}
Let $(M,d)$ be a cone metric space and $T,S: M\longrightarrow M$ two mappings.  $S$ is called a $T-$Zamfirescu mapping,
(TZ -mapping), if and only if, there are real numbers, $0\leq a<1,\,\, 0\leq b,c<1/2$ such that for all $x,y\in M,$ at least one of the next conditions are true:
\begin{description}
\item[($TZ_1$)] $d(TSx,TSy)\leq a d(Tx,Ty)$.

\item[($TZ_2$)] $d(TSx,TSy)\leq b[d(Tx,TSx)+d(Ty,TSy)]$.

\item[($TZ_3$)] $d(TSx,TSy)\leq c[d(Tx,TSy)+d(Ty,TSx)]$.
\end{description}
\end{defi}
If in Definition \ref{classes Zamfrescus} we take $T=I_d$ and $E=\R_{+}$ we obtain the definition of T. Zamfirescu \cite{Za}.

\begin{lem}\label{lem:equiv}
Let $(M,d)$ be a cone metric space and $T,S: M\longrightarrow M$ two mappings. If $S$ is a $TZ-$mapping, then there is $0\leq \delta<1$ such that
\begin{equation}\label{equ:lemma}
d(TSx,TSy)\leq \delta d(Tx,Ty)+2\delta d(Tx,TSx)
\end{equation}
 for all $x,y\in M$.
\end{lem}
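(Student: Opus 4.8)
The plan is to adapt the classical argument of Zamfirescu (in the streamlined form popularized by Berinde) to the cone-ordered setting, being careful that every inequality now means membership in the cone $P$. I would set
$$\delta := \max\left\{\, a,\ \frac{b}{1-b},\ \frac{c}{1-c}\,\right\},$$
and first check that $\delta \in [0,1)$: indeed $a<1$ by hypothesis, and since $0\le b,c<1/2$ the increasing map $t\mapsto t/(1-t)$ sends $[0,1/2)$ into $[0,1)$. The essential point is that this \emph{single} $\delta$ must work no matter which of ($TZ_1$), ($TZ_2$), ($TZ_3$) happens to hold for the given pair $(x,y)$ — the valid alternative may change with $(x,y)$ — so I would dispose of the three cases separately and show each yields \eqref{equ:lemma} with this common constant.

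Case ($TZ_1$) is immediate: $d(TSx,TSy)\le a\,d(Tx,Ty)\le \delta\,d(Tx,Ty)$, and since $d(Tx,TSx)\in P$ we have $2\delta\,d(Tx,TSx)\in P$ by (P2), so adding it to the right-hand side preserves the inequality (transitivity of $\le$ holds because $P$ is a cone). For case ($TZ_2$), the key move is the triangle inequality (d3) together with symmetry (d2):
$$d(Ty,TSy)\le d(Ty,Tx)+d(Tx,TSx)+d(TSx,TSy)=d(Tx,Ty)+d(Tx,TSx)+d(TSx,TSy).$$
Substituting into ($TZ_2$) gives $d(TSx,TSy)\le b\,d(Tx,Ty)+2b\,d(Tx,TSx)+b\,d(TSx,TSy)$, hence
$$(1-b)\,d(TSx,TSy)\le b\,d(Tx,Ty)+2b\,d(Tx,TSx).$$
Since $1-b>0$ is a positive scalar and $P$ is closed under multiplication by nonnegative reals, I may divide by $1-b$ to obtain $d(TSx,TSy)\le \tfrac{b}{1-b}\,d(Tx,Ty)+\tfrac{2b}{1-b}\,d(Tx,TSx)$, and then bound $\tfrac{b}{1-b}\le\delta$ to reach \eqref{equ:lemma} (the last step again uses that $\delta-\tfrac{b}{1-b}\ge0$ and that $d(Tx,Ty),d(Tx,TSx)\in P$).

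Case ($TZ_3$) runs in parallel: from (d3) and (d2), $d(Tx,TSy)\le d(Tx,TSx)+d(TSx,TSy)$ and $d(Ty,TSx)\le d(Tx,Ty)+d(Tx,TSx)$, so ($TZ_3$) yields $d(TSx,TSy)\le c\,d(Tx,Ty)+2c\,d(Tx,TSx)+c\,d(TSx,TSy)$; dividing by $1-c>0$ and using $\tfrac{c}{1-c}\le\delta$ closes this case. Combining the three cases establishes \eqref{equ:lemma} with $0\le\delta<1$. I do not anticipate a genuine obstacle; the only points needing attention are purely order-theoretic — that one may cancel and divide by the positive scalars $1-b$, $1-c$ within the cone, that adding an element of $P$ to a side and chaining two inequalities are legitimate (both immediate from (P2) and the definition of $\le$), and that $\delta$ is fixed once and for all so as to dominate all three alternatives simultaneously.
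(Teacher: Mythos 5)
Your proof is correct and follows essentially the same route as the paper: the same triangle-inequality manipulation in cases ($TZ_2$) and ($TZ_3$), the same cancellation and division by $1-b$, $1-c$, and the same choice $\delta=\max\{a,\tfrac{b}{1-b},\tfrac{c}{1-c}\}$. The only difference is that you spell out the order-theoretic justifications (transitivity, adding elements of $P$, scaling by nonnegative reals) that the paper leaves implicit, which is a welcome but not substantive addition.
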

\begin{proof}
If $S$ is a $TZ-$mapping, then at least one of $(TZ_1)$, $(TZ_2)$ o $(TZ_3)$ condition is true.

If $(TZ_2)$ holds, then:
\begin{equation*}
\begin{array}{ccl}
d(TSx,TSy) &\leq& b[d(Tx,TSx)+d(Ty,TSy)]\\ \\ &\leq& b[d(Tx,TSx)+d(Ty,Tx)+d(Tx,TSx)+d(TSx,TSy)]
\end{array}
\end{equation*}
 thus,
\begin{equation*}
(1-b)d(TSx,TSy)\leq b d(Tx,Ty)+2b d(Tx,TSx).
\end{equation*}
 From the fact that $0\leq b<1/2$ we get:
\begin{equation*}
d(TSx,TSy)\leq \defrac{b}{1-b}d(Tx,Ty)+\defrac{2b}{1-b}d(Tx,TSx).
\end{equation*}
with $\frac{b}{1-b}<1$. If $(TZ_3)$ holds, then similarly we get
\begin{equation*}
d(TSx,TSy)\leq \defrac{c}{1-c}d(Tx,Ty)+\defrac{2c}{1-c}d(Tx,TSx).
\end{equation*}
 Therefore,  denoting by
\begin{equation*}
\delta:=\max\left\{a,\, \defrac{b}{1-b},\, \defrac{c}{1-c}\right\}
\end{equation*}
 we have that $0\leq \delta<1$. Hence, for all $x,y\in M,$ the following inequality holds:
\begin{equation*}
d(TSx,TSy)\leq \delta d(Tx,Ty)+2\delta d(Tx,TSx).
\end{equation*}
\end{proof}

\begin{rem}
Notice that inequality \eqref{equ:lemma} in Lemma \ref{lem:equiv} can be replace by
\begin{equation*}
d(TSx,TSy)\leq \delta d(Tx,Ty)+2\delta d(Tx,TSy)
\end{equation*}
 for all $x,y\in M$.
\end{rem}

\begin{teo}\label{thm:characteristics}
Let $(M,d)$ be a complete cone metric space, $P$ be a normal cone with normal constant $K$. Moreover, let $T: M\longrightarrow M$ be a
continuous and one to one mapping and $S: M\longrightarrow M$ a $T-$Zamfirescu continuous mapping. Then
\begin{itemize}
\item[\textup{(i)}] For every $x_0\in M$,
\begin{equation*}
\tlim_{n\rightarrow \infty} d(TS^nx_0, TS^{n+1}x_0)=0.
\end{equation*}

\item[\textup{(ii)}] There is $y_0\in M$ such that
\begin{equation*}
\tlim_{n\rightarrow \infty} TS^nx_0=y_0.
\end{equation*}

\item[\textup{(iii)}] If $T$ is subsequentially convergent, then $(S^nx_0)$ has a convergent subsequence.

\item[\textup{(iv)}] There is a unique $z_0\in M$ such that $Sz_0=z_0$.

\item[\textup{(v)}] If $T$ is sequentially convergent, then for each $x_0\in M$ the iterate sequence $(S^nx_0)$
           converges to $z_0$.
\end{itemize}
\end{teo}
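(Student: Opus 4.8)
The plan is to run the classical Picard-iteration argument from the Banach contraction principle, but applied to the sequence $(TS^n x_0)$ rather than $(S^n x_0)$, exploiting Lemma \ref{lem:equiv} to convert the $TZ$-condition into a usable single inequality. Throughout, write $x_n := S^n x_0$, so that $TS^n x_0 = T x_n$ and $TS^{n+1} x_0 = T x_{n+1}$.

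\textbf{Part (i).} Apply inequality \eqref{equ:lemma} with $x = x_{n-1}$ and $y = x_n$. Since $S x_{n-1} = x_n$ and $S x_n = x_{n+1}$, the term $d(Tx_{n-1}, TSx_{n-1}) = d(Tx_{n-1}, Tx_n)$ appears on both the left and right in the combination $\delta d(Tx_{n-1},Tx_n) + 2\delta d(Tx_{n-1},Tx_n)$; this gives $d(Tx_n, Tx_{n+1}) \leq 3\delta\, d(Tx_{n-1}, Tx_n)$. That is not immediately a contraction since $3\delta$ need not be less than $1$, so instead I would use the \emph{other} form of \eqref{equ:lemma} recorded in the Remark, namely $d(TSx,TSy)\leq \delta d(Tx,Ty)+2\delta d(Tx,TSy)$: with $x=x_{n-1}$, $y=x_n$ this reads $d(Tx_n, Tx_{n+1}) \leq \delta d(Tx_{n-1},Tx_n) + 2\delta d(Tx_{n-1}, Tx_{n+1})$, and via the triangle inequality $d(Tx_{n-1},Tx_{n+1}) \leq d(Tx_{n-1},Tx_n) + d(Tx_n,Tx_{n+1})$ one obtains $(1-2\delta)d(Tx_n,Tx_{n+1}) \leq 3\delta\, d(Tx_{n-1},Tx_n)$. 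Hmm — this still requires $\delta<1/2$ and $3\delta/(1-2\delta)<1$, i.e. $\delta<1/5$, which is too strong. The correct route is to revisit the three alternatives $(TZ_1),(TZ_2),(TZ_3)$ directly for the pair $(x_{n-1}, x_n)$: in each case, after using $d(Tx_n, Tx_{n+1})$ on both sides and the triangle inequality, one isolates $d(Tx_n,Tx_{n+1}) \leq h\, d(Tx_{n-1},Tx_n)$ with $h = a$, $h=\frac{b}{1-b}$, or $h=\frac{c}{1-c}$ respectively — each strictly less than $1$. Setting $h := \max\{a, \frac{b}{1-b}, \frac{c}{1-c}\} = \delta < 1$, we get $d(Tx_n, Tx_{n+1}) \leq \delta^n d(Tx_0, Tx_1) \to 0$ by normality of $P$ (here $0 \leq \delta^n d(Tx_0,Tx_1)$ forces $\|\delta^n d(Tx_0,Tx_1)\| \leq K\delta^n\|d(Tx_0,Tx_1)\| \to 0$), proving (i).

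\textbf{Parts (ii) and (iii).} To show $(Tx_n)$ is Cauchy, estimate $d(Tx_n, Tx_m)$ for $m > n$ by the triangle inequality as a sum $\sum_{k=n}^{m-1} d(Tx_k, Tx_{k+1}) \leq \bigl(\sum_{k=n}^{m-1}\delta^k\bigr) d(Tx_0,Tx_1) \leq \frac{\delta^n}{1-\delta} d(Tx_0,Tx_1)$; applying the normal-constant bound gives $\|d(Tx_n,Tx_m)\| \leq \frac{K\delta^n}{1-\delta}\|d(Tx_0,Tx_1)\| \to 0$, so by Lemma (iv) (the characterization of Cauchy sequences) $(Tx_n) = (TS^n x_0)$ is Cauchy. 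By completeness of $(M,d)$ there is $y_0 \in M$ with $TS^n x_0 \to y_0$, which is (ii). For (iii), since $(TS^n x_0) = (T(S^n x_0))$ converges and $T$ is subsequentially convergent, the sequence $(S^n x_0)$ has a convergent subsequence, say $S^{n_k}x_0 \to z_0$.

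\textbf{Parts (iv) and (v).} For existence: from $S^{n_k} x_0 \to z_0$, continuity of $T$ gives $TS^{n_k}x_0 \to Tz_0$, but also $TS^{n_k}x_0 \to y_0$, so $Tz_0 = y_0$ by uniqueness of limits (Lemma (ii)). Next, continuity of $S$ gives $S^{n_k+1}x_0 = S(S^{n_k}x_0) \to Sz_0$, hence $TS^{n_k+1}x_0 \to TSz_0$ by continuity of $T$; but $(TS^{n_k+1}x_0)$ is a subsequence of the convergent sequence $(TS^n x_0)$, so it converges to $y_0$, forcing $TSz_0 = y_0 = Tz_0$, and injectivity of $T$ yields $Sz_0 = z_0$. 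For uniqueness: if $Sz_0 = z_0$ and $Sw_0 = w_0$, apply \eqref{equ:lemma} with $x = z_0$, $y = w_0$; since $d(Tz_0, TSz_0) = d(Tz_0,Tz_0) = 0$, we get $d(Tz_0, Tw_0) \leq \delta d(Tz_0,Tw_0)$, and because $\delta < 1$ and $P$ is a cone this forces $d(Tz_0,Tw_0) = 0$, i.e. $Tz_0 = Tw_0$; injectivity of $T$ gives $z_0 = w_0$. Finally (v): if $T$ is sequentially convergent, then since $(TS^n x_0)$ converges the whole sequence $(S^n x_0)$ converges to some point $w$; continuity of $T$ and part (ii) give $Tw = y_0 = Tz_0$, so $w = z_0$ by injectivity, i.e. $S^n x_0 \to z_0$.

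The main obstacle is Part (i): one must be careful \emph{not} to apply the combined inequality \eqref{equ:lemma} naively (which only yields the too-weak bound $3\delta$), but rather to return to the trichotomy $(TZ_1)$--$(TZ_3)$ for consecutive iterates and extract in each case a genuine contraction factor strictly below $1$; everything downstream is the standard telescoping/Cauchy argument transported through $T$, with the normal-cone constant $K$ used only to pass from cone inequalities to norm estimates.
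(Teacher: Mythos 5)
Your proof is correct, and in the crucial step --- part (i) --- it takes a genuinely different route from the paper's. The paper applies the combined inequality \eqref{equ:lemma} of Lemma \ref{lem:equiv} with $x=S^nx_0$, $y=S^{n-1}x_0$, absorbs the term $2\delta\, d(TS^nx_0,TS^{n+1}x_0)$ into the left-hand side, and asserts that the resulting ratio $h=\delta/(1-2\delta)$ is less than $1$; but this requires $\delta<1/3$ (and already $\delta<1/2$ just to divide), whereas $\delta=\max\{a,\tfrac{b}{1-b},\tfrac{c}{1-c}\}$ can be arbitrarily close to $1$. You correctly diagnose that both \eqref{equ:lemma} and its dual form are too lossy when applied to consecutive iterates, and instead return to the trichotomy $(TZ_1)$--$(TZ_3)$ for the pair of consecutive iterates, extracting in each case a genuine contraction factor $a$, $\tfrac{b}{1-b}$ or $\tfrac{c}{1-c}$, each strictly below $1$. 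This is the classical Zamfirescu--Berinde argument, and it actually repairs the paper's gap: your version of (i) is valid for the full parameter range, while the paper's derivation as written is not. The remaining parts --- the telescoping Cauchy estimate passed to norms via the normal constant $K$, completeness, subsequential convergence of $T$, extraction of the fixed point via continuity and injectivity of $T$, and uniqueness via \eqref{equ:lemma} with a fixed point placed in the $x$ slot so that $d(Tx,TSx)=0$ --- coincide with the paper's proof.
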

\begin{proof}
\begin{itemize}

\item[(i)] Since $S$ is a $T-$Zamfirescu mapping, then by Lemma \ref{lem:equiv}, there exists $0<\delta<1$ such
           that
\begin{equation*}
d(TSx,TSy)\leq \delta d(Tx,Ty)+2\delta d(Tx,TSx)
\end{equation*}
for all $x,y\in M$.

Suppose $x_0\in M$ is an arbitrary point and the Picard iteration associated to $S,$ $\;(x_n)$ is defined by
\begin{equation*}
x_{n+1}=Sx_n=S^nx_0,\qquad n=0,1,2,\ldots.
\end{equation*}
 Thus,
\begin{equation*}
d(TS^{n+1}x_0, TS^nx_0)\leq h d(TS^nx_0, TS^{n-1}x_0)
\end{equation*}
 where $h=\defrac{\delta}{1-2\delta}<1$. Therefore, for all $n$ we have
\begin{equation*}
d(TS^{n+1}x_0, TS^nx_0)\leq h^n d(TSx_0, Tx_0).
\end{equation*}
 From the above, and the fact the cone $P$ is a normal cone we obtain that
 \begin{equation*}
\|d(TS^{n+1}x_0, TS^nx_0)\|\leq K h^n \|d(TSx_0, Tx_0)\|,
\end{equation*}
taking limit $n\longrightarrow\infty$ in the above inequality we can conclude that
\begin{equation*}
\tlim_{n\rightarrow \infty} d(TS^{n+1}x_0, TS^nx_0)=0.
\end{equation*}

\item[(ii)] Now, for $m,n\in \N$ with $m>n$ we get
\begin{equation*}
\begin{array}{ccl}
d(TS^mx_0, TS^nx_0) &\leq& (h^n+\ldots+h^{m-1})d(TSx_0, Tx_0)\\ \\ &\leq& \defrac{h^n}{1-h}d(TSx_0, Tx_0).
\end{array}
\end{equation*}
Again; as above, since $P$ is a normal cone we obtain
\begin{equation*}
\tlim_{n,m\rightarrow \infty} d(TS^mx_0, TS^nx_0)=0.
\end{equation*}
 Hence, the fact that $(M,d)$ is a complete cone metric space, imply that $(TS^nx_0)$ is a Cauchy sequence in $M$, therefore there is $y_0\in M$ such that
\begin{equation*}
\tlim_{n\rightarrow \infty} TS^nx_0=y_0.
\end{equation*}

\item[(iii)] If $T$ is subsequentially convergent, $(S^nx_0)$ has a convergent subsequence, so there is
             $z_0\in M$ and $(n_k)_{k=1}^{\infty}$ such that
\begin{equation*}
\tlim_{k\rightarrow \infty} S^{n_k}x_0=z_0.
\end{equation*}

\item[(iv)] Since $T$ and $S$ are continuous mappings we obtain:
\begin{equation*}
\tlim_{k\rightarrow \infty} TS^{n_k}x_0=Tz_0,\qquad \tlim_{k\rightarrow \infty} TS^{n_k+1}x_0=TSz_0
\end{equation*}
therefore,  $Tz_0=y_0=TSz_0,$ and since $T$ is one to one, then $Sz_0=z_0.$ So $S$ has a fixed point.

Now, suppose that $Sz_0=z_0$ and $Sz_1=z_1$.
\begin{equation*}
\begin{array}{ccl}
d(TSz_0,TSz_1) &\leq& \delta d(Tz_0,Tz_1)+2\delta d(Tz_0, TSz_0)\\ \\ d(Tz_0, Tz_1) &\leq& \delta d(Tz_0, Tz_1)
\end{array}
\end{equation*}
from the fact that $0\leq\delta<1$ and that $T$ is one to one we obtain that $z_0=z_1$.

\item[(v)] It is clear that if $T$ is sequentially convergent, then for each $x_0\in M$, the iterate
           sequence $(S^nx_0)$ converges to $z_0$.
\end{itemize}
\end{proof}

In 2003, V. Berinde (see, \cite{Be}, \cite{Ber}) introduced a new class of contraction mappings on metric spaces,
 which are called weak contractions. We will extend these kind of mappings by introducing a new function $T$ and we define
  it in the framework of cone metric spaces.

\begin{defi}
Let $(M,d)$ be a cone metric space and $T,S: M\longrightarrow M$ two mappings. $S$ is called a $T-$weak contraction,
(TW- Contraction, $T_{(S,L)}-$Contraction), if there exist a constant $\delta\in (0,1)$ and some $L\geq 0$ such that
\begin{equation*}
d(TSx,TSy)\leq \delta d(Tx,Ty)+L d(Ty,TSx)
\end{equation*}
for all $x,y\in M$.
\end{defi}

It is clear that if we take $T=I_d$ and $E=\R_{+}$ then we obtain the notion of Berinde \cite{Be}.

Due to the symmetry of the metric, the $T-$weak contractive condition implicitly include the following dual one:
\begin{equation*}
d(TSx, TSy)\leq \delta d(Tx, Ty)+L d(Tx,TSy)
\end{equation*}
for all $x,y\in M$.

The next proposition gives examples of $T-$weak contraction and it proof is  similar to the proof of Lemma \ref{lem:equiv}.

\begin{pro}
Let $(M,d)$ be a cone metric space and $T,S: M\longrightarrow M$ two mappings.
\begin{itemize}
\item[\textup{(i)}] If $S$ is a TB - contraction, then $S$ is a $T-$weak contraction.

\item[\textup{(ii)}] If $S$ is a TK - contraction, then $S$ is a $T-$weak contraction.

\item[\textup{(iii)}] If $S$ is a TC - contraction, then $S$ is a $T-$weak contraction.

\item[\textup{(iv)}] If $S$ is TZ - mapping, then $S$ is a $T-$weak contraction.
\end{itemize}
\end{pro}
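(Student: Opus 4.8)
The plan is to verify each of the four implications directly by exhibiting, in each case, the constants $\delta\in(0,1)$ and $L\ge 0$ that realize the $T$-weak contraction inequality $d(TSx,TSy)\le \delta\, d(Tx,Ty)+L\, d(Ty,TSx)$. The cases (i) and (iv) are essentially immediate or already done: for (i), a TB-contraction with constant $a\in[0,1)$ satisfies the weak contraction with $\delta=a$ and $L=0$; for (iv), Lemma \ref{lem:equiv} already gives $d(TSx,TSy)\le \delta\, d(Tx,Ty)+2\delta\, d(Tx,TSx)$, and the remark following it (which replaces $d(Tx,TSx)$ by $d(Tx,TSy)$) together with the dual form of the weak contraction condition noted just before this proposition yields the claim with the same $\delta$ and $L=2\delta$.

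For (ii), starting from the TK-contraction inequality $d(TSx,TSy)\le b[d(Tx,TSx)+d(Ty,TSy)]$ with $b\in[0,1/2)$, I would bound $d(Tx,TSx)\le d(Tx,Ty)+d(Ty,TSx)$ and $d(Ty,TSy)\le d(Ty,TSx)+d(TSx,TSy)$ by the triangle inequality (d3), substitute, collect the $d(TSx,TSy)$ term on the left to get $(1-b)d(TSx,TSy)\le b\, d(Tx,Ty)+2b\, d(Ty,TSx)$, and divide by $1-b>0$. This gives $\delta=\tfrac{b}{1-b}<1$ and $L=\tfrac{2b}{1-b}$. For (iii), from $d(TSx,TSy)\le c[d(Tx,TSy)+d(Ty,TSx)]$ with $c\in[0,1/2)$, I would use $d(Tx,TSy)\le d(Tx,Ty)+d(Ty,TSx)+d(TSx,TSy)$ (two applications of d3 via the point $TSx$, or $Tx\to Ty\to TSx\to TSy$), substitute and rearrange to $(1-c)d(TSx,TSy)\le c\, d(Tx,Ty)+2c\, d(Ty,TSx)$, then divide; again $\delta=\tfrac{c}{1-c}<1$ and $L=\tfrac{2c}{1-c}$.

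The only mild subtlety — and the step I expect to demand the most care — is that all these manipulations take place in the cone-ordered Banach space $E$ rather than in $\R$, so I must use only the order-compatible operations: adding inequalities, multiplying by nonnegative scalars, and transitivity of $\le$, all of which are valid by properties (P2)–(P3) and the definition of $\le$. In particular, "collecting the $d(TSx,TSy)$ term on the left and dividing by $1-b$" is legitimate because $u\le v+w$ in $E$ with $w=\lambda u$, $0\le\lambda<1$, gives $(1-\lambda)u\le v$ and hence $u\le(1-\lambda)^{-1}v$; no positivity of individual coordinates is needed. Once this is noted, each of the four parts is a two- or three-line computation, and the proposition follows. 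I would close by remarking that the normality of $P$ plays no role here — it is only needed later for the limit arguments — so the statement holds for an arbitrary cone.

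\begin{proof}
We exhibit suitable constants in each case.

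\textbf{(i)} If $S$ is a TB-contraction with constant $a\in[0,1)$, then $d(TSx,TSy)\le a\, d(Tx,Ty)=a\, d(Tx,Ty)+0\cdot d(Ty,TSx)$, so $S$ is a $T$-weak contraction with $\delta=a$ and $L=0$.

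\textbf{(ii)} If $S$ is a TK-contraction with constant $b\in[0,1/2)$, then using (d3),
\begin{equation*}
d(Tx,TSx)\le d(Tx,Ty)+d(Ty,TSx),\qquad d(Ty,TSy)\le d(Ty,TSx)+d(TSx,TSy),
\end{equation*}
hence
\begin{equation*}
d(TSx,TSy)\le b\big[d(Tx,Ty)+2d(Ty,TSx)+d(TSx,TSy)\big],
\end{equation*}
so $(1-b)d(TSx,TSy)\le b\, d(Tx,Ty)+2b\, d(Ty,TSx)$ and therefore
\begin{equation*}
d(TSx,TSy)\le \defrac{b}{1-b}\, d(Tx,Ty)+\defrac{2b}{1-b}\, d(Ty,TSx),
\end{equation*}
which is the $T$-weak contraction condition with $\delta=\frac{b}{1-b}\in(0,1)$ and $L=\frac{2b}{1-b}\ge 0$.

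\textbf{(iii)} If $S$ is a TC-contraction with constant $c\in[0,1/2)$, then by (d3),
\begin{equation*}
d(Tx,TSy)\le d(Tx,Ty)+d(Ty,TSx)+d(TSx,TSy),
\end{equation*}
hence
\begin{equation*}
d(TSx,TSy)\le c\big[d(Tx,Ty)+2d(Ty,TSx)+d(TSx,TSy)\big],
\end{equation*}
so $(1-c)d(TSx,TSy)\le c\, d(Tx,Ty)+2c\, d(Ty,TSx)$ and therefore
\begin{equation*}
d(TSx,TSy)\le \defrac{c}{1-c}\, d(Tx,Ty)+\defrac{2c}{1-c}\, d(Ty,TSx),
\end{equation*}
which is the $T$-weak contraction condition with $\delta=\frac{c}{1-c}\in(0,1)$ and $L=\frac{2c}{1-c}\ge 0$.

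\textbf{(iv)} If $S$ is a TZ-mapping, then by the Remark following Lemma \ref{lem:equiv} there is $0\le\delta<1$ with
\begin{equation*}
d(TSx,TSy)\le \delta\, d(Tx,Ty)+2\delta\, d(Tx,TSy)
\end{equation*}
for all $x,y\in M$, which is precisely the dual form of the $T$-weak contraction condition with the same $\delta$ and $L=2\delta\ge 0$.
\end{proof}
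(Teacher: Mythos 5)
Your proof is correct and follows exactly the route the paper intends: the paper omits the argument, stating only that "its proof is similar to the proof of Lemma \ref{lem:equiv}", and your triangle-inequality rearrangements (collect the $d(TSx,TSy)$ term, divide by $1-b$ or $1-c$, and in (iv) invoke the Remark plus the dual form of the weak-contraction condition) are precisely that argument adapted to produce the $d(Ty,TSx)$ term. The only cosmetic point is that when $a$, $b$ or $c$ equals $0$ your $\delta$ is $0\notin(0,1)$; in that degenerate case one simply replaces it by any $\delta\in(0,1)$, since the right-hand side only increases.
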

Now we have the following result:

\begin{teo}
Let $(M,d)$ be a complete cone metric space, $P$  a normal cone with normal constant $K$. Let furthermore $T: M\longrightarrow M$  a continuous and one to one  mapping and  $S: M\longrightarrow M$ a continuous $T-$weak contraction. Then
\begin{itemize}
\item[\textup{(i)}] For every $x_0\in M$,
\begin{equation*}
\tlim_{n\rightarrow \infty} d(TS^nx_0, TS^{n+1}x_0)=0.
\end{equation*}

\item[\textup{(ii)}] There is $y_0\in M$ such that
\begin{equation*}
\tlim_{n\rightarrow \infty} TS^nx_0=y_0.
\end{equation*}

\item[\textup{(iii)}] If $T$ is subsequentially convergent, then $(S^nx_0)$ has a convergent subsequence.

\item[\textup{(iv)}] There is $z_0\in M$ such that
\begin{equation*}
Sz_0=z_0.
\end{equation*}

\item[\textup{(v)}] If $T$ is sequentially convergent, then for each $x_0\in M$ the iterate sequence $(S^nx_0)$
           converges to $z_0$.
\end{itemize}
\end{teo}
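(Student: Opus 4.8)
The plan is to reproduce, almost line for line, the argument used for Theorem~\ref{thm:characteristics}, the only genuinely new feature being the extra summand $L\,d(Ty,TSx)$ in the $T$-weak contractive inequality. The whole proof hinges on a single observation: if one evaluates the inequality along consecutive Picard iterates in the right order, this summand is identically $0$, because it becomes $d$ of a point with itself. After that, the $L$-term never reappears and everything proceeds as before, using only the triangle inequality, normality of $P$, completeness of $M$, continuity of $T$ and $S$, and injectivity of $T$.

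For (i) I would fix $x_0\in M$, set $x_{n+1}=Sx_n=S^{n+1}x_0$, and apply the $T$-weak contractive condition with $x=x_{n-1}$, $y=x_n$. Since $Sx_{n-1}=x_n$ we get $TSx_{n-1}=Tx_n$, hence $d(Ty,TSx)=d(Tx_n,Tx_n)=0$, and the inequality collapses to
\[
d(TS^{n}x_0,TS^{n+1}x_0)\le \delta\, d(TS^{n-1}x_0,TS^{n}x_0),
\]
so by induction $d(TS^{n}x_0,TS^{n+1}x_0)\le \delta^{n} d(Tx_0,TSx_0)$; normality then gives $\|d(TS^{n}x_0,TS^{n+1}x_0)\|\le K\delta^{n}\|d(Tx_0,TSx_0)\|\to 0$, which is (i). For (ii), for $m>n$ the triangle inequality together with (i) yields $d(TS^{m}x_0,TS^{n}x_0)\le \sum_{k=n}^{m-1}\delta^{k} d(Tx_0,TSx_0)\le \frac{\delta^{n}}{1-\delta} d(Tx_0,TSx_0)$, whence, by normality, $(TS^{n}x_0)$ is Cauchy and, $(M,d)$ being complete, converges to some $y_0\in M$.

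For (iii)--(v) I would argue exactly as in Theorem~\ref{thm:characteristics}: if $T$ is subsequentially convergent, then from the convergence of $(TS^{n}x_0)$ one extracts a subsequence $S^{n_k}x_0\to z_0$; continuity of $T$ gives $TS^{n_k}x_0\to Tz_0$, so $Tz_0=y_0$, while continuity of $S$ followed by $T$ gives $TS^{n_k+1}x_0\to TSz_0$, and since $(TS^{n_k+1}x_0)$ is a subsequence of $(TS^{n}x_0)$ it also tends to $y_0$; thus $TSz_0=y_0=Tz_0$ and injectivity of $T$ forces $Sz_0=z_0$. Note that, unlike the Zamfirescu case, no uniqueness is asserted (and it genuinely may fail for $T$-weak contractions). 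Finally, if $T$ is sequentially convergent, then from $TS^{n}x_0\to y_0$ the full sequence $(S^{n}x_0)$ converges, and since it has the subsequence $S^{n_k}x_0\to z_0$, it converges to $z_0$.

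The main obstacle is essentially the sole subtlety here, namely recognising the correct substitution in (i) that annihilates $L\,d(Ty,TSx)$; conceptually one should also note that the dual form of the condition, stated just before the definition, could be used symmetrically to the same effect. Everything past that step is routine bookkeeping already carried out in the proof of Theorem~\ref{thm:characteristics}.
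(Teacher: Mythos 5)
Your proposal is correct and follows exactly the route the paper intends (its own ``proof'' is just the remark that one argues as in Theorem~\ref{thm:characteristics}): the substitution $x=x_{n-1}$, $y=x_n$ makes $d(Ty,TSx)=d(Tx_n,TSx_{n-1})=0$, the $L$-term disappears, and the Cauchy, completeness, and injectivity arguments go through verbatim. If anything, your version is cleaner than the paper's model proof, since you get the ratio $\delta<1$ directly rather than the quantity $\delta/(1-2\delta)$ appearing in the proof of Theorem~\ref{thm:characteristics}.
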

\begin{proof}
Similar to the proof of Theorem \ref{thm:characteristics}.
\end{proof}

As we see in Theorem \ref{thm:characteristics}, a $T-$Zamfirescu mapping has a unique fixed point. The next example shows that a $T-$weak contraction may has infinitely fixed points.

\begin{ejem}[\cite{Beri}]

Let $M=[0,1]$ be the unit interval with the usual metric and $T,S: M\longrightarrow M$ the identity maps, that is, $Tx=Sx=x$ for all $x\in M$. Then, taking $0\leq a<1$ and $L\geq 1-a$ we obtain
\begin{equation*}
\begin{array}{ccl}
d(TSx, TSy) &=& |TSx-TSy|\\ \\ |x-y| &\leq& a|x-y|+L|y-x|
\end{array}
\end{equation*}
 which is valid for all $x,y\in [0,1]$. Thus the set of the fixed points $F_S$ of the map $S$ is the interval $[0,1]$. I.e.,
\begin{equation*}
F_{S}=\{x\in [0,1]\,/\, Sx=x\}=[0,1].
\end{equation*}
\end{ejem}

It is possible to force the uniqueness of the fixed point of a $T-$weak contraction by imposing an additional contractive condition, as is shown in the next theorem.

\begin{teo}
Let $(M,d)$ be a complete cone metric space, $P$ be a normal cone with normal constant $K$. Let furthermore $T: M\longrightarrow M$  a continuous and one to one mapping and $S: M\longrightarrow M$  a $T-$weak contraction for which there is $\theta\in (0,1)$ and some $L_1\geq 0$ such that
\begin{equation*}
d(TSx, TSy)\leq \theta d(Tx,Ty)+L_1 d(Tx, TSx)
\end{equation*}
 for all $x,y\in M$. Then:
\begin{itemize}
\item[\textup{(i)}] For every $x_0\in M$
\begin{equation*}
\tlim_{n\rightarrow \infty} d(TS^nx_0, TS^{n+1}x_0)=0.
\end{equation*}

\item[\textup{(ii)}] There is $y_0\in M$ such that
\begin{equation*}
\tlim_{n\rightarrow \infty} TS^nx_0=y_0.
\end{equation*}

\item[\textup{(iii)}] It $T$ is subsequentially convergent, then $(S^nx_0)$ has a convergent subsequence.

\item[\textup{(iv)}] There is a unique $z_0\in M$ such that
\begin{equation*}
Sz_0=z_0.
\end{equation*}

\item[\textup{(v)}] If $T$ is sequentially convergent, then for each $x_0\in M$ the iterate sequence $(S^nx_0)$
           converges to $z_0.$
\end{itemize}
\end{teo}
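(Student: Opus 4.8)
The plan is to follow the scheme of Theorem \ref{thm:characteristics} almost verbatim, since parts (i)--(iii) and (v) only use the basic $T$-weak contraction inequality, and the extra hypothesis involving $\theta$ and $L_1$ is needed solely to upgrade the existence of a fixed point in part (iv) to \emph{uniqueness}. First I would fix $x_0\in M$ and form the Picard iterates $x_{n+1}=Sx_n=S^nx_0$. Applying the $T$-weak contraction inequality with $x=x_n$, $y=x_{n-1}$ gives
\begin{equation*}
d(TS^{n+1}x_0,TS^nx_0)\leq \delta\, d(TS^nx_0,TS^{n-1}x_0)+L\, d(TS^{n-1}x_0,TS^nx_0),
\end{equation*}
but this is not yet contractive. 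The standard Berinde trick is to instead use the \emph{second} (dual) form of the condition, namely $d(TSx,TSy)\leq \delta d(Tx,Ty)+L\,d(Tx,TSy)$, with $x=x_{n-1}$, $y=x_n$; then the term $d(Tx,TSy)=d(TS^{n-1}x_0,TS^{n+1}x_0)$ is handled differently, or more simply one uses the supplementary hypothesis with $x=x_{n-1}$, $y=x_n$ to get $d(TS^nx_0,TS^{n+1}x_0)\leq \theta\, d(TS^{n-1}x_0,TS^nx_0)+L_1\,d(TS^{n-1}x_0,TS^nx_0)$ — still not enough unless $\theta+L_1<1$. The cleanest route, which I would take, is exactly the one in Theorem \ref{thm:characteristics}: from the supplementary inequality with $x=x_n$, $y=x_{n-1}$ we obtain $d(TS^{n+1}x_0,TS^nx_0)\leq \theta\, d(TS^nx_0,TS^{n-1}x_0)+L_1\,d(TS^nx_0,TS^{n+1}x_0)$, but note the last term should read $d(TS^nx_0,TS^{n+1}x_0)$ on the right only if we picked the right variable ordering; picking $x=x_{n-1}$, $y=x_n$ in $d(TSx,TSy)\leq \theta d(Tx,Ty)+L_1 d(Tx,TSx)$ gives $d(TS^nx_0,TS^{n+1}x_0)\leq\theta d(TS^{n-1}x_0,TS^nx_0)+L_1 d(TS^{n-1}x_0,TS^nx_0)=(\theta+L_1)d(TS^{n-1}x_0,TS^nx_0)$, which is contractive precisely when $\theta+L_1<1$; if that is not assumed, one combines the two inequalities (original $\delta,L$ and the new $\theta,L_1$) to extract a constant $h<1$ as in the Zamfirescu lemma. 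Granting contractivity with some $h<1$, parts (i) and (ii) follow verbatim from Theorem \ref{thm:characteristics}: $\|d(TS^{n+1}x_0,TS^nx_0)\|\leq Kh^n\|d(TSx_0,Tx_0)\|\to 0$, and the telescoping estimate $d(TS^mx_0,TS^nx_0)\leq \frac{h^n}{1-h}d(TSx_0,Tx_0)$ together with normality of $P$ shows $(TS^nx_0)$ is Cauchy, hence convergent to some $y_0\in M$ by completeness.

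For part (iii), subsequential convergence of $T$ applied to the convergent sequence $(TS^nx_0)$ yields a subsequence $S^{n_k}x_0\to z_0$ for some $z_0\in M$. For part (iv), continuity of $T$ and $S$ gives $TS^{n_k}x_0\to Tz_0$ and $TS^{n_k+1}x_0\to TSz_0$; since both are subsequences of the convergent sequence $(TS^nx_0)$ with limit $y_0$, we get $Tz_0=y_0=TSz_0$, and injectivity of $T$ forces $Sz_0=z_0$, so $z_0$ is a fixed point. For uniqueness, suppose $Sz_0=z_0$ and $Sz_1=z_1$. Apply the supplementary inequality with $x=z_0$, $y=z_1$: then $d(TSz_0,TSz_1)=d(Tz_0,Tz_1)\leq \theta d(Tz_0,Tz_1)+L_1 d(Tz_0,TSz_0)=\theta d(Tz_0,Tz_1)+L_1 d(Tz_0,Tz_0)=\theta d(Tz_0,Tz_1)$, since $d(Tz_0,TSz_0)=d(Tz_0,Tz_0)=0$. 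Because $0\le\theta<1$, this forces $d(Tz_0,Tz_1)=0$, i.e.\ $Tz_0=Tz_1$, and injectivity of $T$ gives $z_0=z_1$. (The point of the $L_1 d(Tx,TSx)$ form rather than the $L d(Ty,TSx)$ form is exactly that the perturbation term vanishes when evaluated at two fixed points, whereas $L d(Ty,TSx)=L d(Tz_1,Tz_0)$ does not.) Finally, part (v): if $T$ is sequentially convergent, then from $TS^nx_0\to y_0=Tz_0$ the whole sequence $S^nx_0$ converges, and its limit $w$ satisfies $Tw=Tz_0$ by continuity of $T$, hence $w=z_0$ by injectivity; thus $S^nx_0\to z_0$ for every $x_0$.

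The main obstacle is making the first step genuinely contractive. Unlike the Zamfirescu case — where Lemma \ref{lem:equiv} packages the three alternatives into a single inequality with a manifestly small constant — here we are handed two separate inequalities, and a priori neither $\delta$ nor $\theta+L_1$ need be less than $1$ on its own (the constants $L$ and $L_1$ are only assumed nonnegative). So the crux is the algebraic manipulation that, by combining $d(TS^{n+1}x_0,TS^nx_0)\leq \delta d(TS^nx_0,TS^{n-1}x_0)+L\,d(TS^{n-1}x_0,TS^nx_0)$ (using the dual form so the extra term is $d(Tx,TSy)$ with $x=x_n,y=x_{n-1}$, which still involves consecutive iterates, not a gap) with the supplementary estimate, produces a bound $d(TS^{n+1}x_0,TS^nx_0)\le h\, d(TS^nx_0,TS^{n-1}x_0)$ with $h=\frac{\theta}{1-L_1}$ or similar expression that one checks is $<1$ — mirroring the role of $h=\frac{\delta}{1-2\delta}$ in Theorem \ref{thm:characteristics}. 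Once that single contractive recursion is in hand, everything else is a transcription of the earlier proof, with the uniqueness argument being the only place the supplementary hypothesis earns its keep. Accordingly, I would write: ``The proof of (i), (ii), (iii) and (v) is identical to that of Theorem \ref{thm:characteristics}, using the supplementary contractive condition in place of inequality \eqref{equ:lemma} to obtain the contractive recursion; for (iv), existence follows as in Theorem \ref{thm:characteristics}, and for uniqueness, if $Sz_0=z_0$ and $Sz_1=z_1$ then the supplementary condition gives $d(Tz_0,Tz_1)\le\theta d(Tz_0,Tz_1)$, whence $Tz_0=Tz_1$ and, $T$ being injective, $z_0=z_1$.''
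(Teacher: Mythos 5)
Your uniqueness argument for (iv) is exactly the paper's: for two fixed points $z_0,z_1$ the supplementary condition gives $d(Tz_0,Tz_1)\le\theta\, d(Tz_0,Tz_1)+L_1\, d(Tz_0,TSz_0)=\theta\, d(Tz_0,Tz_1)$, whence $Tz_0=Tz_1$ and $z_0=z_1$ by injectivity of $T$; and the paper, like you, defers everything else to Theorem \ref{thm:characteristics}. The problem lies in the part you yourself single out as ``the main obstacle'': establishing the contractive recursion $d(TS^{n+1}x_0,TS^nx_0)\le h\, d(TS^nx_0,TS^{n-1}x_0)$ with $h<1$. None of the three routes you sketch closes this. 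Requiring $\theta+L_1<1$ is not among the hypotheses; the constant $\frac{\theta}{1-L_1}$ is not even defined (let alone $<1$) when $L_1\ge 1$, and you give no derivation producing it; and ``combining the two inequalities as in the Zamfirescu lemma'' is not a computation. You also miscompute the perturbation term in your first display: with $x=x_n$, $y=x_{n-1}$ the term $L\,d(Ty,TSx)$ equals $L\,d(TS^{n-1}x_0,TS^{n+1}x_0)$, not $L\,d(TS^{n-1}x_0,TS^nx_0)$.

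The fix is the one-line observation that makes Berinde-type weak contractions work under Picard iteration, and it uses only the original $T$-weak contraction inequality, not the supplementary one: take $x=x_{n-1}=S^{n-1}x_0$ and $y=x_n=S^nx_0$ in $d(TSx,TSy)\le\delta\, d(Tx,Ty)+L\,d(Ty,TSx)$. Since $y=Sx$, one has $TSx=Tx_n=Ty$, so the perturbation term is $L\,d(Ty,Ty)=0$ and the inequality collapses to
\begin{equation*}
d(TS^{n}x_0,TS^{n+1}x_0)\le\delta\, d(TS^{n-1}x_0,TS^{n}x_0),
\end{equation*}
with $\delta<1$ by hypothesis. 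From here (i), (ii), (iii) and (v) are the verbatim transcription of Theorem \ref{thm:characteristics} that you describe (normality of $P$ plus the geometric telescoping estimate gives the Cauchy property, completeness gives $y_0$, subsequential convergence gives $z_0$, continuity and injectivity give $Sz_0=z_0$). So your skeleton and your uniqueness step are right, but the engine of the proof is missing as written; note that the supplementary $(\theta,L_1)$ condition is needed only for uniqueness, exactly as you guessed, and plays no role in the recursion.
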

\begin{proof}
 Assume $S$ has two distinct fixed points $x^{*}, y^{*}\in M.$ Then
\begin{equation*}
d(Tx^{*},Ty^{*})=d(TSx^{*}, TSy^{*})\leq \theta d(Tx^{*}, Ty^{*})+L_1 d(Tx^{*}, TSx^{*})
\end{equation*}
thus, we get
\begin{equation*}
d(Tx^{*}, Ty^{*})\leq \theta d(Tx^{*}, Ty^{*})\Leftrightarrow (1-\theta)d(Tx^{*}, Ty^{*})\leq0.
\end{equation*}
Therefore, $d(Tx^{*}, Ty^{*})=0$. Since $T$ is one to one, then $x^{*}=y^{*}$.

The rest of the proof follows as  the the proof of Theorem \ref{thm:characteristics}.
\end{proof}

\end{document}